\numberwithin{equation}{section}
\theoremstyle{plain}
\newtheorem{Thm}{Theorem}[section]
\newtheorem{Prop}[Thm]{Proposition}
\newtheorem{Lem}[Thm]{Lemma}
\newtheorem{Cor}[Thm]{Corollary}
\theoremstyle{definition}
\newtheorem{Def}[Thm]{Definition}
\newtheorem{Rem}[Thm]{Remark}
\newcommand{\Nu}{\mathcal{V}}
\newcommand{\defn}[1]{\textbf{\textit{#1}}}
\newcommand{\D}{\mathcal{D}}
\newcommand{\R}{\mathbb{R}}
\newcommand{\G}{\mathcal{G}}
\renewcommand{\H}{\mathcal{H}}
\newcommand{\g}{\mathfrak{g}}
\newcommand{\too}{\longrightarrow}
\newcommand{\mtoo}{\longmapsto}
\newcommand{\Ad}{\operatorname{Ad}}
\newcommand{\pr}{\mathrm{pr}}
\newcommand{\C}{\mathcal{C}}
\renewcommand{\O}{\mathcal{O}}
\newcommand{\M}{\mathcal{M}}
\newcommand{\N}{\mathcal{N}}
\title{Deformations of quasi-Hamiltonian spaces}
\date{June 16, 2025}
\author[Jean-Philippe Burelle]{Jean-Philippe Burelle}
\author[Mohamed Moussadek Maiza]{Mohamed Moussadek Maiza}
\author[Maxence Mayrand]{Maxence Mayrand}
\address[Jean-Philippe Burelle]{D\'{e}partement de math\'{e}matiques \\ Universit\'{e} de Sherbrooke \\ 2500 Bd de l’Universit\'{e} \\ Sherbrooke, QC, J1K 2R1, Canada}
\email{jean-philippe.burelle@usherbrooke.ca}
\address[Mohamed Moussadek Maiza]{D\'{e}partement de math\'{e}matiques \\ Universit\'{e} de Sherbrooke \\ 2500 Bd de l’Universit\'{e} \\ Sherbrooke, QC, J1K 2R1, Canada}
\email{mohamed.moussadek.maiza@usherbrooke.ca}
\address[Maxence Mayrand]{D\'{e}partement de math\'{e}matiques \\ Universit\'{e} de Sherbrooke \\ 2500 Bd de l’Universit\'{e} \\ Sherbrooke, QC, J1K 2R1, Canada}
\email{maxence.mayrand@usherbrooke.ca}
\begin{document}

\begin{abstract}
We introduce a notion of deformations of quasi-Hamiltonian $G$-spaces to Hamiltonian $G$-spaces and provide several examples.
In particular, we show that the double $G \times G$ of a Lie group, viewed as a quasi-Hamiltonian $G \times G$-space, deforms smoothly to the cotangent bundle $T^*G$.
Likewise, any conjugacy class of $G$ sufficiently close to the identity deforms to a coadjoint orbit.
We further show that the moduli space of flat $G$-connections on a compact oriented surface of genus $g$ with $r+1$ boundary components deforms to $T^*G^{r+g}$.
\end{abstract}

\maketitle

\section{Introduction}

Quasi-Hamiltonian geometry \cite{alekseev1998lie} can be thought of as a ``multiplicative analogue'' of Hamiltonian geometry \cite{marsden1974reduction}, where the dual $\g^*$ of the Lie algebra $\g$ is replaced by the group $G$ integrating $\g$.
As such, many natural spaces in Hamiltonian geometry have multiplicative analogues in quasi-Hamiltonian geometry.
A prominent example is the double $D(G) = G \times G$ of a Lie group $G$, which is a quasi-Hamiltonian $G \times G$-space analogous to the cotangent bundle $T^*G$.
Similarly, conjugacy classes in $G$ are quasi-Hamiltonian spaces analogous to the coadjoint orbits in $\g^*$ (see e.g.\ \cite{BM} and the references therein for more examples).
The goal of this paper is to connect such pairs via smooth deformations.

In more detail, we define a Hamiltonian deformation of a quasi-Hamiltonian $G$-space $M$ to a Hamiltonian $G$-space $N$ as a smooth $G$-manifold $\D$ with a submersion $\pi : \D \to \R$, a smooth family of $2$-forms $\widehat{\omega}_t$ on the fibers $\D_t = \pi^{-1}(t)$ and a ``moment map'' $\widehat{\mu} : \D \to (G \times \R^\times) \sqcup (\g \times \{0\})$ (where the codomain is the deformation space of $G$ at $1$ as defined in \S\ref{lq5uqbl3}), such that $\D_t$ smoothly interpolates from $M$ at $t = 1$ to $N$ at $t = 0$; see Definition \ref{swcoikg4} for the precise formulation.

We provide the following examples of Hamiltonian deformations.

\begin{Thm}
Let $G$ be a Lie group equipped with an invariant, non-degenerate, symmetric bilinear form on its Lie algebra $\g$.
\begin{enumerate}[label={\textup{(\roman*)}}]
\item
The double $D(G) = G \times G$ admits a Hamiltonian deformation to the cotangent bundle $T^*G$ \textup{(Theorem \ref{ikr142qd})}.
\item
Let $x \in \g$ be such that $d\exp_{tx}$ is invertible for all $t \in [0, 1]$.
Then the conjugacy class $C_{e^x} \subset G$ admits a Hamiltonian deformation to the coadjoint orbit $\O_x \subset \g \cong \g^*$ \textup{(Theorem \ref{thnlzgh9})}.
\item
Let $\Sigma$ be a compact connected oriented surface of genus $g$ with $r + 1$ boundary components and suppose that $G$ is compact.
The moduli space of flat $G$-connections on $\Sigma$ (resp.\ with holonomies on the boundaries in fixed conjugacy classes) admits a Hamiltonian deformation to $T^*G^{r+g}$ (resp.\ to a symplectic reduction of $T^*G^{r+g}$) \textup{(Corollary \ref{lstnw1kj} and Corollary \ref{rupcqbmj})}.
\end{enumerate}
\end{Thm}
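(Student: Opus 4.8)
The plan is to treat the three parts through a single underlying mechanism and then build the third from the first. The common idea is that a quasi-Hamiltonian structure with $G$-valued moment map $\mu$ should ``linearize'' to a Hamiltonian structure with $\g \cong \g^*$-valued moment map once $\mu$ is concentrated near the identity and rescaled: replacing a group element by $\exp(t\xi)$ and dividing the moment map by $t$, the Cartan $3$-form $\tfrac{1}{12}\langle\theta,[\theta,\theta]\rangle$ scales like $t^3$ while the minimal-coupling terms scale like $t$, so that after dividing the $2$-form by $t^2$ the closedness $d\omega_0 = 0$ and the linear moment-map condition emerge in the limit $t \to 0$. For each example I would write down an explicit total space $\D$, a $2$-form $\widehat{\omega}$, and a moment map $\widehat{\mu}$ valued in $(G \times \R^\times) \sqcup (\g \times \{0\})$, and verify the axioms of Definition \ref{swcoikg4} fiberwise, with the $t \ne 0$ fibers quasi-Hamiltonian and the $t = 0$ fiber Hamiltonian.

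For (i), I would model $T^*G$ as $G \times \g$ by left-trivialization and set $\D = G \times \g \times \R$, with the fiberwise map into $D(G) = G \times G$ given schematically by $(g,\xi) \mapsto (g, \exp(t\xi))$ (up to the symmetrization needed to respect the $G \times G$-action); one then pulls back the double's $2$-form and moment map and rescales the relevant $G$-factor of the moment map by $1/t$. The step to get right is the smooth extension across $t = 0$: one must show that the pulled-back, rescaled $2$-form extends smoothly and that its value at $t = 0$ is the canonical symplectic form on $T^*G$, and that $\tfrac1t\log$ of the rescaled moment-map component converges to the linear moment map of $T^*G$. This is where the Taylor expansions of $\exp$ and $d\exp$ enter. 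For (ii), I would use the exponential to identify the orbits: the map $\Ad_h x \mapsto \Ad_h\exp(tx) = \exp(t\,\Ad_h x)$ carries $\O_x$ onto the conjugacy class $C_{\exp(tx)}$, and the hypothesis that $d\exp_{tx}$ is invertible for all $t \in [0,1]$ guarantees this is a diffeomorphism and that the family is a submersion with no collapsing. Setting $\D = \O_x \times \R$ with fiber $C_{\exp(tx)}$ for $t \ne 0$ and $\O_x$ at $t = 0$, and moment map the inclusion post-composed with $\tfrac1t\log$, I would then check that the rescaled pullback of the conjugacy-class $2$-form converges to the Kirillov--Kostant--Souriau form; the invertibility hypothesis is precisely what keeps the construction nondegenerate and smooth up to $t = 0$.

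For (iii), the strategy is to reduce to (i) via fusion. I would first invoke the Alekseev--Malkin--Meinrenken presentation of the moduli space as the fusion product of $r + g$ copies of the double $D(G)$, one for each of the $g$ handles and one for each of the $r$ boundary components beyond the distinguished one. I would then prove a general compatibility lemma: the fusion product of two Hamiltonian deformations is again a Hamiltonian deformation, interpolating the fusion product $M_1 \circledast M_2$ to the ordinary product $N_1 \times N_2$ with summed moment map. Applying this lemma $r + g - 1$ times to the deformations produced in (i) yields a deformation of the moduli space to $(T^*G)^{r+g} = T^*G^{r+g}$.

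The main obstacle is this fusion-compatibility lemma. The fusion $2$-form carries the extra term $\tfrac12\langle\mu_1^*\bar\theta, \mu_2^*\theta\rangle$ built from the two $G$-valued moment maps, and one must show that under the simultaneous rescaling $\mu_i \rightsquigarrow \exp(t\,\cdot)$ this term is $O(t)$ and hence disappears in the limit, so that the $t = 0$ fiber is exactly the product Hamiltonian structure rather than a twisted version of it. Verifying that this fusion term, together with the rescaled $2$-forms, assembles into a single smooth $\widehat{\omega}$ on the total space $\D$ --- and that the combined moment map still lands in $(G \times \R^\times) \sqcup (\g \times \{0\})$ --- is the crux of the argument, and I expect it to absorb most of the technical effort.
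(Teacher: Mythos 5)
Your reduction of (iii) to (i) via a fusion-compatibility lemma, and your construction for (ii), follow the same route as the paper; but your construction for (i) has a genuine gap, and it is exactly the point where the paper's key idea enters. Taking $\D = G \times \g \times \R$ and transporting the double's structure through $(g,\xi) \mapsto (g, \exp(t\xi))$ does not produce a Hamiltonian deformation in the sense of Definition \ref{swcoikg4}, because for fixed $t \ne 0$ that map is not a diffeomorphism onto $D(G)$: $\exp$ fails to be injective whenever $G$ contains a circle subgroup (so for every compact $G$ of positive dimension) and fails to be surjective for, e.g., $G = SL(2,\R)$. Concretely, for compact $G$ your fiber $\pi^{-1}(t) = G \times \g$ is non-compact while $D(G) = G \times G$ is compact, so $\D_t$ cannot be isomorphic to $(D(G), \tfrac1t\omega, \mu, G\times G)$, which the definition demands for \emph{every} $t \ne 0$. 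The missing idea is the deformation space (deformation to the normal cone) $\G = (G \times \R^\times) \sqcup (\g \times \{0\})$: the paper takes the total space $G \times \G$, whose fibers over $t \ne 0$ are literally $G \times G$, and uses the exponential chart $\phi(x,t) = (\exp(tx), t)$ of Lemma \ref{15sdkw85} \emph{only} on a neighborhood of the zero fiber, where it \emph{is} a diffeomorphism, to check that the rescaled form $\tfrac1t \pr^*\omega$ extends smoothly with limit the canonical symplectic form (Theorem \ref{ikr142qd}). This smooth structure on $\G$ is also what makes $\widehat{\mu}$ a well-defined smooth map in the first place, a point your proposal leaves implicit. A smaller issue: under $\beta = \exp(t\xi)$ the double's $2$-form is $O(t)$, not $O(t^2)$, so the correct rescaling is $\tfrac1t\omega$ together with the pairing $\tfrac1t\langle\cdot,\cdot\rangle$, as in Definition \ref{swcoikg4}; dividing by $t^2$ as you propose would diverge.

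For (ii), your family is the paper's $\C \subset \G$ (Theorem \ref{thnlzgh9}) read through the chart $\phi$, and your limit computation is the paper's. One caution: invertibility of $d\exp_{tx}$ only makes $y \mapsto \exp(ty)$ an equivariant \emph{local} diffeomorphism $\O_x \to C_{\exp(tx)}$; injectivity is a separate global matter (for $G = SO(3)$ and $x$ generating a rotation by angle $\pi$, the map $\O_x \to C_{e^x}$ is the double cover $S^2 \to \R\mathrm{P}^2$ even though $d\exp_{tx}$ is invertible for all $t \in [0,1]$), so the parenthetical ``guarantees this is a diffeomorphism'' needs justification or a strengthened hypothesis. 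For (iii), your strategy is the paper's (Theorem \ref{jtwx49l3} and Corollary \ref{6mj8ptsk}), with one structural caveat: the moduli space involves \emph{internal} fusions (the handle factors $\mathbf{D}(G)$ and the diagonal gluings), so the primitive statement must be internal-fusion compatibility of a single deformation, from which external fusion of two deformations follows, rather than the other way around. The crux you identify --- that the fusion correction term built from the two moment maps must vanish in the $t \to 0$ limit so that the zero fiber is the honest product Hamiltonian structure --- is the right one, and it is what the $\g$-valued $1$-forms $\widehat{\theta}^L, \widehat{\theta}^R$ on $\G$ (Proposition \ref{1487h8tl}) and the additive multiplication $m : \G \times_\R \G \to \G$ (Proposition \ref{haurfcai}) are designed to accomplish, with the correction term weighted so that it restricts to $\tfrac1{2t}\langle \mu_1^*\theta^L \wedge \mu_2^*\theta^R\rangle$ on the fiber over $t \ne 0$ and to zero at $t = 0$.
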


The third part follows from more general results about deformations of fusion products and reductions.

\begin{Thm}
\
\begin{enumerate}[label={\textup{(\roman*)}}]
\item
\textbf{Fusion.}
Suppose that there are Hamiltonian deformations of quasi-Hamiltonian $G \times H_i$-spaces $M_i$ to Hamiltonian $G \times H_i$-spaces $N_i$ for $i = 1, 2$. 
Then there is a Hamiltonian deformation of the fusion product $M_1 \circledast M_2$ to $N_1 \times N_2$ \textup{(Corollary \ref{6mj8ptsk})}.
\item
\textbf{Reduction.}
Suppose that there is a Hamiltonian deformation of a quasi-Hamiltonian $G \times H$-space $M$ to a Hamiltonian $G \times H$-space $N$.
Under suitable conditions, the quasi-Hamiltonian reduction $M_{e^x}$ for $x \in \g$ admits a Hamiltonian deformation to the Hamiltonian reduction $N_x$ \textup{(Theorem \ref{oi2khfoe})}.
\end{enumerate}
\end{Thm}

\subsection{Acknowledgments}
We acknowledge the support of the Natural Sciences and Engineering Research Council of Canada (NSERC) [funding reference numbers RGPIN-2020-05557 and RGPIN-2023-04587]. The second author thanks the Institut des sciences mathématiques ISM and the Ernest-Monga scholarship for their financial support.

\section{Deformations of quasi-Hamiltonian spaces}

The goal of this section is to define Hamiltonian deformations.
We begin with some preliminaries on deformation spaces and quasi-Hamiltonian geometry.

\subsection{Deformation spaces}\label{lq5uqbl3}
Consider a pair $(M, N)$, where $M$ is a smooth manifold and $i : N \hookrightarrow M$ is a smooth submanifold of $M$.
Let $\Nu(M, N)$ be the normal bundle of $N$ in $M$.
Recall that the \defn{deformation space} \cite{MR925720} (see also \cite{MR3846057,bischoff2020deformation}) of $(M, N)$ is the disjoint union
\[
\D(M, N) \coloneqq (M \times \R^\times) \sqcup (\Nu(M, N) \times \{0\})
\]
endowed with the unique topology and smooth structure satisfying the following properties.
\begin{enumerate}[label={(\roman*)}]
\item
The projection
\[
\pi: \D(M, N) \longrightarrow \R,
\quad
(x, t) \mtoo t
\]
is a smooth submersion.

\item
The map
\[
\kappa : \mathcal{D}(M,N) \longrightarrow M,
\quad
(x, t)
\mtoo
\begin{cases}
x & \text{if } t \ne 0 \\
p(x) & \text{if } t = 0
\end{cases}
\]
is smooth, where $p : \Nu(M, N) \to N$ is the projection.

\item 
For every $f \in C^\infty(M)$ such that $f|_N = 0$, the function 
\[
\tilde{f} : \D(M, N) \too \R,
\quad
(x, t)
\mtoo
\begin{cases}
\frac{1}{t}f(x)  & \text{if } t \ne 0 \\
df(x) & \text{if } t = 0
\end{cases}
\]
is smooth.
\end{enumerate}

Of particular importance for this paper is the deformation space
\begin{equation}\label{reboyrqr}
\G \coloneqq \D(G, \{1\}) = (G \times \R^\times) \sqcup (\g \times \{0\})
\end{equation}
of a Lie group $G$ to its Lie algebra $\g$.
The corresponding submersion will be denoted
\[
\pi_{\G} : \G \too \R.
\]
The following result gives convenient coordinates near the zero fiber of $\G$ and will be central to the paper.

\begin{Lem}\label{15sdkw85}
The map
\[
\phi : \g \times \R \too \G, \quad
(x, t) \mtoo
\begin{cases}
(\exp(tx), t) & \text{if } t \ne 0 \\
(x, 0) & \text{if } t = 0
\end{cases}
\]
is smooth.
Moreover, $\phi$ restricts to a diffeomorphism on an open set $U \subset \g \times \R$ containing $\g \times \{0\}$ and $\{0\} \times \R$.
\end{Lem}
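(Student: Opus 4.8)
The plan is to separate the locus $\{t \neq 0\}$, where $\G$ is simply $G \times \R^\times$ with its product smooth structure, from the zero fiber $\g \times \{0\}$, where the two pieces of $\G$ are glued together. On $\{t \neq 0\}$ the map reads $(x,t) \mapsto (\exp(tx), t)$, which is smooth because $\exp$ is; hence all the content is concentrated along $\g \times \{0\}$. To treat smoothness there I would use that, by the defining properties (i)--(iii), a map into $\G$ is smooth precisely when its composites with $\pi$, $\kappa$, and with every $\tilde f$ (for $f \in C^\infty(G)$ with $f(1) = 0$) are smooth --- equivalently, that these functions furnish coordinates transverse to the zero fiber, which is exactly what the last step verifies.

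Next I compute the three composites. Clearly $\pi \circ \phi(x,t) = t$, and $\kappa \circ \phi(x,t) = \exp(tx)$ for all $t$ (at $t = 0$ this reads $p(x) = 1 = \exp(0)$), both smooth. The essential point is $\tilde f \circ \phi$: for $t \neq 0$ it equals $\tfrac1t f(\exp(tx))$ and at $t = 0$ it equals $df_1(x)$. Writing $F(x,t) = f(\exp(tx))$, a smooth function with $F(x,0) = f(1) = 0$, Hadamard's lemma gives $F = t\,H$ with $H$ smooth and $H(x,0) = \partial_t F(x,0) = df_1(x)$; thus $\tilde f \circ \phi = H$ is smooth with the correct zero-fiber value. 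I expect this Hadamard step to be the crux of the argument --- it is precisely what tames the singular factor $\tfrac1t$ across $t = 0$.

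For the diffeomorphism claim I would build explicit coordinates. Fix $f_1,\dots,f_n \in C^\infty(G)$ with $f_i(1) = 0$ and $\{df_i|_1\}$ a basis of $\g^* = T_1^*G$ (coordinate functions near $1$, extended globally). By (i) and (iii) the map $\Phi = (\pi, \tilde f_1, \dots, \tilde f_n) \colon \G \to \R^{1+n}$ is smooth, and by the previous step $\Phi \circ \phi(x,t) = (t, H_1(x,t), \dots, H_n(x,t))$ with $H_i(x,0) = df_i|_1(x)$. At any point $(x_0, 0)$ the Jacobian of $\Phi \circ \phi$ has determinant equal to that of $(df_i|_1)$, which is nonzero, so $d\phi_{(x_0,0)}$ is injective; since $\dim(\g \times \R) = \dim \G = n+1$, it is an isomorphism, and $\phi$ is a local diffeomorphism at every point of $\g \times \{0\}$.

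Finally, choose an open ball $V \subset \g$ about $0$ on which $\exp$ is a diffeomorphism onto its image, and set $U = \{(x,t) : tx \in V\}$; this is open and contains both $\g \times \{0\}$ and $\{0\} \times \R$, since $0 \cdot x = t \cdot 0 = 0 \in V$. For $(x_0,t_0) \in U$ with $t_0 \neq 0$ the $x$-derivative of $\exp(tx)$ is $t_0\,(d\exp)_{t_0 x_0}$, invertible because $t_0 x_0 \in V$; combined with the $t = 0$ case above, $\phi$ is a local diffeomorphism at every point of $U$. It is also injective on $U$: if $\phi(x_1,t_1) = \phi(x_2,t_2)$, then either both images lie in $\g \times \{0\}$, forcing $x_1 = x_2$, or $t_1 = t_2 = t \neq 0$ with $\exp(tx_1) = \exp(tx_2)$ and $tx_1, tx_2 \in V$, whence $tx_1 = tx_2$ and $x_1 = x_2$. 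An injective local diffeomorphism is a diffeomorphism onto its open image, so $\phi|_U$ is the desired chart.
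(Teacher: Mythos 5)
Your proof is correct, and its first half takes the same route as the paper's: both reduce smoothness of $\phi$ to smoothness of the composites $\pi \circ \phi$, $\kappa \circ \phi$, and $\tilde{f} \circ \phi$, and compute the same three expressions; your Hadamard factorization $f(\exp(tx)) = t\,H(x,t)$ spells out the one smoothness assertion the paper leaves unproved. One caution: your claim that this testing criterion holds ``by the defining properties (i)--(iii)'' is imprecise, since those properties give only the forward direction (smooth map $\Rightarrow$ smooth composites). The converse is the statement that $\pi$, $\kappa^*f$, and the $\tilde{f}$ generate $C^\infty(\G)$, which the paper cites from \cite[Appendix A]{bischoff2020deformation}; your parenthetical suggestion that your later coordinate construction verifies it is circular, because showing $(\pi, \tilde{f}_1, \dots, \tilde{f}_n)$ is a chart on $\G$ near the zero fiber requires differentiating it on $\G$, i.e., already knowing the smooth structure there --- so cite the generation fact as the paper does. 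The second half of your argument genuinely differs, and to your advantage. At zero-fiber points the paper shows $\ker d\phi_{(x,0)} = 0$ directly (the $\R$-component vanishes because $d\pi \circ d\phi$ is the projection to $\R$, and then $y = 0$ because $\phi$ restricts to the identity along the zero fiber), whereas you read off injectivity from the invertible Jacobian of $\Phi \circ \phi = (t, H_1, \dots, H_n)$; both work, the paper's being shorter and free of the auxiliary choice of the $f_i$. More significantly, the paper stops at pointwise invertibility of $d\phi$ (at the zero fiber, and at points where $d\exp_{tx}$ is invertible) and leaves the existence of the open set $U$ implicit, while your explicit $U = \{(x,t) : tx \in V\}$, the fiberwise injectivity argument using $\pi \circ \phi = t$, and the observation that an injective local diffeomorphism is a diffeomorphism onto its open image supply precisely the missing step: a single open set containing $\g \times \{0\}$ and $\{0\} \times \R$ on which $\phi$ is an honest diffeomorphism, not merely a local one.
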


\begin{proof}
We can construct coordinates on a deformation space $\D(M, N)$ with functions of the form $\pi$, $\kappa^*f$ for $f \in C^\infty(M)$, and $\tilde{f}$ for $f \in C^\infty(M)$ such that $f|_N = 0$; see \cite[Appendix A]{bischoff2020deformation}.
Hence, to show that $\phi$ is smooth, it suffices to show that $\pi_\G \circ \phi$, $\kappa \circ \phi$, and $\tilde{f} \circ \phi$ are smooth for every $f \in C^\infty(G)$ such that $f(1) =
0$.
We have $(\pi_\G \circ \phi)(x, t) = t$, which is clearly smooth.
Now, for all $(x, t) \in \g \times \R$, we have $(\kappa \circ \phi)(x, t) = \exp(tx)$, so $\kappa \circ \phi$ is also smooth.
Finally, if $f \in C^\infty(G)$ and $f(1) = 0$, then for all $(x, t) \in \g \times \R$ we have
\[
(\tilde{f} \circ \phi)(x, t) = 
\begin{cases}
\frac{1}{t} f(\exp(tx)) & \text{if } t \ne 0 \\
df(x) & \text{if } t = 0,
\end{cases}
\]
so $\tilde{f} \circ \phi$ is also smooth.
We also notice that $d\phi_{(x, t)}(y, u) = (ux + d\exp_{tx}(ty), u)$ for all $(x, t) \in \g \times \R^\times$ and $(y, u) \in \g \times \R$.
Hence, $d\phi_{(x, t)}$ is invertible if $d\exp_{tx}$ is invertible and $t \ne 0$.
For the case where $t = 0$, let $(y, u) \in T_{(x, 0)}(\g \times \R) = \g \times \R$ be such that $d\phi_{(x, 0)}(y, u) = 0$.
Since $d\pi_\G(d\phi_{(x, 0)}(y, u)) = u$, we have $u = 0$.
It follows that $d\phi_{(x, 0)}(y, u) = y \in T_{(x, 0)}(\g \times \{0\}) = \g$, so $y = 0$.
\end{proof}

Note that $G$ acts naturally on $\G$ by conjugation on the first factor of $G \times \R^\times$ and the adjoint action on the first factor of $\g \times \{0\}$.
We call this the \defn{conjugation action} of $G$ on $\G$.

\begin{Prop}\label{xppf45jj}
The conjugation action of $G$ on $\G$ is smooth.
\end{Prop}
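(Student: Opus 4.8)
The plan is to exploit the coordinates provided by Lemma \ref{15sdkw85} and to check smoothness of the action map $a : G \times \G \to \G$ locally, splitting $G \times \G$ into the region lying over $t \neq 0$ and a neighborhood of the zero fiber. Away from the zero fiber, $\G$ restricts to the product manifold $G \times \R^\times$ (by properties (i)--(ii) of the deformation space), on which the action is simply $(g, (h, t)) \mapsto (g h g^{-1}, t)$; this is manifestly smooth because conjugation $G \times G \to G$ is smooth. So the only real content is smoothness near the points of $\g \times \{0\}$.

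For this I would pull the action back through $\phi$. The key observation is that in the $\phi$-coordinates the conjugation action becomes linear in the Lie-algebra variable. Indeed, using $g \exp(tx) g^{-1} = \exp\!\bigl(t\, \Ad_g x\bigr)$ for $t \neq 0$ together with the definition of the adjoint action on $\g \times \{0\}$, one checks the single identity
\[
a\bigl(g, \phi(x, t)\bigr) = \phi\bigl(\Ad_g x,\, t\bigr),
\]
valid uniformly across $t = 0$ and $t \neq 0$. In other words, $a \circ (\mathrm{id}_G \times \phi) = \phi \circ \Psi$, where $\Psi(g, (x, t)) = (\Ad_g x, t)$ is visibly smooth since $\Ad : G \times \g \to \g$ is smooth and $\phi$ is smooth by Lemma \ref{15sdkw85}.

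To turn this into an honest statement about a neighborhood of the zero fiber, I would restrict to the open set $U \subset \g \times \R$ on which $\phi$ is a diffeomorphism, and set $V = \{(g, (x, t)) \in G \times U : (\Ad_g x, t) \in U\}$. Since $\g \times \{0\} \subseteq U$ and $(\Ad_g x, 0) \in \g \times \{0\}$, the set $V$ is open and contains all of $G \times (\g \times \{0\})$; moreover $(\mathrm{id}_G \times \phi)$ restricted to $G \times U$ is a diffeomorphism onto an open subset of $G \times \G$, so $(\mathrm{id}_G \times \phi)(V)$ is an open neighborhood of the zero fiber in $G \times \G$. On $V$ the composite $\phi \circ \Psi$ is smooth, and transporting it through the diffeomorphism $(\mathrm{id}_G \times \phi)|_V$ shows that $a$ is smooth on $(\mathrm{id}_G \times \phi)(V)$. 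Together with the easy region over $\R^\times$, this covers all of $G \times \G$.

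The main obstacle is really just the bookkeeping needed to ensure that $\Psi$ maps a neighborhood of the zero fiber back into the chart domain $U$, so that $\phi$ may legitimately be applied; this is precisely where openness of $U$ and continuity of $\Ad$ enter. The conceptual heart---the identity $g\exp(tx)g^{-1} = \exp\!\bigl(t\,\Ad_g x\bigr)$ collapsing the two cases into the single smooth formula $(g, (x,t)) \mapsto (\Ad_g x, t)$---is what makes smoothness across the zero fiber transparent.
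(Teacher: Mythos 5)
Your proof is correct and is essentially the paper's own argument: smoothness over $\R^\times$ is immediate, and near the zero fiber the paper likewise invokes the equivariance of $\phi$ from Lemma \ref{15sdkw85} with respect to the adjoint action on $\g \times \R$ (your identity $a(g,\phi(x,t)) = \phi(\Ad_g x, t)$), just stated more tersely. Your extra bookkeeping with the set $V$ is harmless but not needed, since $\phi$ is smooth on all of $\g \times \R$ and one only needs $(\mathrm{id}_G \times \phi)$ to be invertible on $G \times U$.
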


\begin{proof}
The action is clearly smooth on the open subset $G \times \R^\times$.
Since the map $\phi$ of Lemma \ref{15sdkw85} is equivariant with respect to the adjoint action on the first factor of $\g \times \R$, the conjugation action on $\G$ is also smooth on a neighborhood of the zero fiber.
\end{proof}

The following result will be useful for our discussion on the fusion product.

\begin{Prop}\label{haurfcai}
The map
\[
m : \G \times_\R \G \too \G,
\quad
((a, t), (b, t)) \mtoo \begin{cases}
(ab, t) & \text{if } t \ne 0 \\
(a + b, 0) & \text{if } t = 0
\end{cases}
\]
is smooth.
\end{Prop}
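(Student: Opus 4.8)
The plan is to reduce smoothness of $m$ to the smoothness of a single $\g$-valued function via the coordinates of Lemma \ref{15sdkw85}, leaving a Baker--Campbell--Hausdorff computation as the only analytic input. First, since $\pi$ is a submersion, the map $\pi \times \pi : \G \times \G \to \R \times \R$ is transverse to the diagonal, so $\G \times_\R \G = (\pi \times \pi)^{-1}(\Delta_\R)$ is a smooth manifold. On the open subset $(G \times G) \times \R^\times$ of the fiber product, $m$ is simply $((a,b),t) \mapsto (ab, t)$ valued in the open submanifold $G \times \R^\times \subset \G$, hence smooth because the multiplication of $G$ is smooth. It therefore remains to prove smoothness on a neighborhood of the zero fiber $\g \times \{0\}$.

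For this I would parametrize the fiber product near its zero fiber using $\phi$. Since $\phi$ restricts to a diffeomorphism from an open set $U \supset \g \times \{0\}$ onto an open subset $\phi(U) \subset \G$, the map $(x, y, t) \mapsto (\phi(x,t), \phi(y,t))$ is a diffeomorphism from $\{(x,y,t) : (x,t), (y,t) \in U\}$ onto an open neighborhood of the zero fiber of $\G \times_\R \G$ (the two images automatically share the common $\pi$-value $t$). Expressing $m$ in the coordinates $\phi$ on the target as well, the claim is that
\[
m(\phi(x,t), \phi(y,t)) = \phi(z(x,y,t), t), \qquad
z(x,y,t) \coloneqq \begin{cases} \tfrac{1}{t}\log(\exp(tx)\exp(ty)) & t \ne 0 \\ x + y & t = 0. \end{cases}
\]
Indeed, for $t \ne 0$ the definition of $z$ gives $\exp(tz) = \exp(tx)\exp(ty)$, so $\phi(z,t) = (\exp(tx)\exp(ty), t) = m(\phi(x,t),\phi(y,t))$, while for $t = 0$ both sides equal $(x+y, 0)$.

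The crux is then to show that $z$ extends smoothly across $t = 0$. By the Baker--Campbell--Hausdorff formula, $\log(\exp(A)\exp(B)) = A + B + \tfrac12[A,B] + \cdots$ is an analytic $\g$-valued function of $(A,B)$ near the origin, whose homogeneous components $c_n(A,B)$ have total degree $n \ge 1$ in $(A,B)$ jointly. Substituting $A = tx$, $B = ty$ gives $c_n(tx,ty) = t^n c_n(x,y)$ by homogeneity, so dividing by $t$ leaves the convergent series $\sum_{n \ge 1} t^{n-1} c_n(x,y) = (x+y) + \tfrac{t}{2}[x,y] + \cdots$, which is smooth in $(x,y,t)$ and equals $x+y$ at $t = 0$. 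Thus $z$ is smooth on a neighborhood of $\g \times \g \times \{0\}$, after shrinking so that $\exp(tx)\exp(ty)$ stays in the domain of $\log$ (possible since it tends to $1$ as $t \to 0$). Because $\phi$ is smooth by Lemma \ref{15sdkw85}, the composite $(x,y,t) \mapsto \phi(z(x,y,t), t)$ is smooth, and hence so is $m$ near the zero fiber. The main obstacle is precisely this verification that the rescaled BCH series $z$ is smooth at $t = 0$; everything else is formal.
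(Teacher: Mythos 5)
Your proof is correct and is essentially the same as the paper's: the paper also reduces to a neighborhood of the zero fiber, pulls back through the chart $\phi$ of Lemma \ref{15sdkw85} on both source and target, and observes that $m$ becomes the map $((x,t),(y,t)) \mapsto \frac{1}{t}\mathrm{BCH}(tx,ty)$ (your $z$), extended by $x+y$ at $t=0$. The only difference is that the paper simply asserts smoothness of this rescaled BCH map, whereas you justify it via the homogeneity $c_n(tx,ty) = t^n c_n(x,y)$ of the BCH components --- a worthwhile extra detail, but not a different argument.
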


\begin{proof}
Since $m$ is clearly smooth away from the zero fiber, it is enough to show that it is smooth on a neighborhood of the set $\{((x, 0), (y, 0)) : x, y \in \g\}$.
Let $W \subset (\g \times \R) \times_\R (\g \times \R)$ be the set of $((x, t), (y, t))$ such that the Baker--Campbell--Hausdorff series $\mathrm{BCH}(tx, ty)$ converges.
Then the map
\[
\tilde{m} : W \too \g \times \R, \quad ((x, t), (y, t)) \mtoo
\begin{cases}
\frac{1}{t}\mathrm{BCH}(tx, ty) & \text{if } t \ne 0 \\
x + y & \text{if } t = 0
\end{cases}
\]
is smooth.
The statement then follows from Lemma \ref{15sdkw85} and the fact that the diagram 
\[
\begin{tikzcd}
W \arrow{r}{\tilde{m}} \arrow[swap]{d}{\phi \times \phi} & \g \times \R \arrow{d}{\phi} \\
\G \times_\R \G \arrow{r}{m} & \G
\end{tikzcd}
\]
commutes.
\end{proof}

Let $\theta^L$ and $\theta^R$ be the left and right invariant Maurer--Cartan forms on $G$.
We use left trivialisation to identify $T_gG$ with $\g$ for all $g \in G$.
Thus, $\theta^L_g(x) = x$ and $\theta^R_g(x) = \Ad_gx$ for all $(g, x) \in G \times \g = TG$.

\begin{Prop}\label{1487h8tl}
There are unique smooth sections $\widehat{\theta}^L$ and $\widehat{\theta}^R$ of $(\wedge^2 (\ker d\pi_\G)^*) \otimes \g \to \G$ such that $\widehat{\theta}^L|_{\G_t} = \frac{1}{t}\theta^L$ and $\widehat{\theta}^R|_{\G_t} = \frac{1}{t}\theta^R$ for all $t \ne 0$, and $\widehat{\theta}^L|_{\G_0} = \widehat{\theta}^R|_{\G_0}$ is the identity map $T_x\g = \g \to \g$ for all $x \in \g$.
\end{Prop}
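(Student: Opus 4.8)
The plan is to build the two forms first on the open dense piece $G \times \R^\times \subset \G$, where they are forced by the fiber conditions, and then to extend them across the zero fiber by working in the chart $\phi$ of Lemma \ref{15sdkw85}. On $V_1 \coloneqq G \times \R^\times$, set $\eta^L \coloneqq \tfrac{1}{t}\pr_G^*\theta^L$ and $\eta^R \coloneqq \tfrac{1}{t}\pr_G^*\theta^R$, where $\pr_G \colon V_1 \to G$ is the projection. These are smooth, and since $\pr_G \circ i_t = \mathrm{id}_G$ and $i_t^*\,dt = 0$, they restrict to $\tfrac{1}{t}\theta^L$ and $\tfrac{1}{t}\theta^R$ on each nonzero fiber. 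The entire difficulty is that the factor $\tfrac{1}{t}$ blows up as $t \to 0$, so $\eta^L$ and $\eta^R$ do not by themselves extend smoothly to $\G$.

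To locate the singularity I would pull back by $\phi$. Using $d\phi_{(x,t)}(y,u) = (ux + d\exp_{tx}(ty), u)$ from Lemma \ref{15sdkw85}, together with the identities $d\exp_{tx}(x) = x$ and $\Ad_{\exp(tx)}x = x$ (both because $\operatorname{ad}_x x = 0$), a direct computation on $U \cap \{t \neq 0\}$ gives
\[
\phi^*\eta^L = d\exp_{tx}(dx) + \tfrac{x}{t}\,dt,
\qquad
\phi^*\eta^R = \Ad_{\exp(tx)}\big(d\exp_{tx}(dx)\big) + \tfrac{x}{t}\,dt,
\]
where $dx$ is the tautological $\g$-valued $1$-form on the $\g$-factor. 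The crucial observation is that the singular term $\tfrac{x}{t}\,dt$ is identical in both cases and is a multiple of $dt$, hence invisible to every inclusion $i_t$; meanwhile the leading terms pull back to $\tfrac{1 - e^{-\operatorname{ad}_{tx}}}{\operatorname{ad}_{tx}}(dx)$ and $\tfrac{e^{\operatorname{ad}_{tx}} - 1}{\operatorname{ad}_{tx}}(dx)$, which are entire functions of $\operatorname{ad}_{tx}$ and therefore extend smoothly across $t = 0$, where both equal $dx$.

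This suggests defining, on the open neighborhood $V_2 \coloneqq \phi(U)$ of the zero fiber, the smooth forms $\zeta^L$ and $\zeta^R$ obtained by pushing the two leading terms forward under the diffeomorphism $\phi|_U$. By construction $\phi^*(\eta^L - \zeta^L) = \phi^*(\eta^R - \zeta^R) = \tfrac{x}{t}\,dt$ on the overlap $V_1 \cap V_2$, so being multiples of $dt$ these differences vanish on every fiber, i.e.\ $\eta^\bullet$ and $\zeta^\bullet$ have the same fiber restrictions there. I would then pick a partition of unity $\{\rho_1, \rho_2\}$ subordinate to the cover $\{V_1, V_2\}$ of $\G$ (note $V_1 \cup V_2 = \G$) and set $\widehat{\theta}^L \coloneqq \rho_1 \eta^L + \rho_2 \zeta^L$ and $\widehat{\theta}^R \coloneqq \rho_1 \eta^R + \rho_2 \zeta^R$, which are global smooth $1$-forms.

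It remains to read off the fiber restrictions. For $t \neq 0$ every point of $G \times \{t\}$ lies in $V_1$, and where it also lies in $V_2$ the two summands agree on the fiber, so $i_t^*\widehat{\theta}^L = (\rho_1 \circ i_t + \rho_2 \circ i_t)\,\tfrac{1}{t}\theta^L = \tfrac{1}{t}\theta^L$, and likewise for $\theta^R$. On the zero fiber, which avoids $V_1$, one has $\rho_1 = 0$ and $\rho_2 = 1$, so $\widehat{\theta}^L$ and $\widehat{\theta}^R$ both restrict to $i_0^*\zeta^L = i_0^*\zeta^R = dx$, which at each $x$ is the identity $T_x\g = \g \to \g$, as required. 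The main obstacle is precisely the smoothness across the zero fiber, and the whole argument hinges on recognizing that the obstruction to extending $\tfrac{1}{t}\theta^{L/R}$ lives entirely in the $dt$-direction and can thus be absorbed without changing any fiber restriction.
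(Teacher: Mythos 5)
Your proof is correct and takes essentially the same approach as the paper: construct the form on $G \times \R^\times$, produce a smooth extension across the zero fiber in the chart $\phi$ via the key computation that the pullback is $d\exp_{tx}(dx)$ (which tends to $dx$ as $t \to 0$), and glue with a partition of unity. The only cosmetic difference is that the paper defines the regular part intrinsically as $\frac{1}{t}\pr^*\theta^L$, where $\pr$ is the projection onto $\ker d\pi$ along the distribution $d\phi(\{0\} \times T\R)$, whereas you obtain the very same form by explicitly splitting off and discarding the singular $\frac{x}{t}\,dt$ term.
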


\begin{proof}
We prove the existence of $\widehat{\theta}^L$; the existence of $\widehat{\theta}^R$ is done similarly.
Define $\widehat{\theta}^L$ by $\frac{1}{t}\theta^L$ on $G \times \R^\times \subset \G$; we need to show that $\widehat{\theta}^L$ extends smoothly to $\mathrm{Id}_\g$ at $t = 0$.
Let $\phi$ and $U$ be as in Lemma \ref{15sdkw85} and let $V \coloneqq \phi(U) \subset \G$.
Note that $\pi_\G \circ \phi = \pr_\R : \g \times \R \to \R$ is the projection onto the second factor.
It is then enough to show that $\phi^*\widehat{\theta}^L$ extends smoothly to $\mathrm{Id}_\g$ at $t = 0$ as a section of $(\wedge^2 \ker d\pr_\R) \otimes \g$.
For all $(x, t) \in \g \times \R^\times$ and $v \in \ker((d\pr_\R)_{(x, t)}) = \g$, we have
\begin{align*}
(\phi^*\widehat{\theta}^L)_{(x, t)}(v) &= \frac{1}{t} \theta^L(d\exp_{tx}(tv)) = d\exp_{tx}(v),
\end{align*}
which indeed extends smoothly to $\mathrm{Id}_\g$ as $t \to 0$.
\end{proof}

\subsection{Hamiltonian and quasi-Hamiltonian spaces}

A \defn{quasi-Hamiltonian space} will be denoted as a tuple $(M, \omega, \mu, G)$, where $M$ is a smooth manifold, $\omega$ is a 2-form, $\mu : M \to G$ is a smooth map, and $G$ is a Lie group equipped with an invariant non-degenerate symmetric bilinear form $\langle \cdot, \cdot \rangle$ on its Lie algebra $\g$, and satisfying the properties listed in \cite[Section 2.2]{alekseev1998lie}.
Note that for all $t \in \R^\times$, $(M, \frac{1}{t} \omega, \mu, G)$ is a quasi-Hamiltonian space with respect to $\frac{1}{t}\langle \cdot, \cdot \rangle$.

Similarly, a \defn{Hamiltonian space} is a tuple $(N, \sigma, \nu, G)$, where $\sigma$ is a symplectic form on $N$ and $\nu : N \to \g^* \cong \g$ is a moment map for the action of $G$ on $N$.

\subsection{Deformations of quasi-Hamiltonian spaces}

Equipped with these notions, we are now ready to make the main definition of this paper.

\begin{Def}\label{swcoikg4}
A \defn{Hamiltonian deformation} of a quasi-Hamiltonian space $(M, \omega, \mu, G)$ to a Hamiltonian space $(N, \sigma, \nu, G)$ is a smooth manifold $\D$ together with 
\begin{enumerate}[label={(\roman*)}]
\item\label{d5uauarr}
a smooth action of $G$ on $\D$,
\item
a $G$-invariant smooth submersion $\pi : \D \to \R$ with $[0, 1] \subset \operatorname{im} \pi$,
\item
a $G$-equivariant smooth map $\widehat{\mu} : \D \to \G$ (with respect to the conjugation action of Proposition \ref{xppf45jj}) such that $\pi_{\mathcal{G}} \circ \widehat{\mu} = \pi$, and
\item\label{v0b6urvr}
a smooth section $\widehat{\omega}$ of the vector bundle $\wedge^2(\ker d\pi)^* \to \D$, 
\end{enumerate}
such that the following conditions hold with respect to the fiber $\D_t \coloneqq \pi^{-1}(t)$, the 2-form $\omega_t \coloneqq \widehat{\omega}|_{\D_t}$ on $\D_t$, the map $\mu_t \coloneqq \widehat{\mu}|_{\D_t}$, and the $G$-action on $\D_t$ coming from \ref{d5uauarr}:
\begin{itemize}
\item $(\D_t, \widehat{\omega}_t, \widehat{\mu}_t, G)$ is a quasi-Hamiltonian space for all $t \ne 0$ and a Hamiltonian space for $t = 0$;
\item
$\D_1 \cong M$ as quasi-Hamiltonian spaces;
\item
$\D_0 \cong N$ as Hamiltonian spaces.
\end{itemize}
\end{Def}

\begin{Rem}
We could alternatively define a Hamiltonian deformation by replacing the section $\widehat{\omega}$ of $\wedge^2(\ker d\pi^*)$ in \ref{v0b6urvr} with a 2-form on $\mathcal{D}$.
In the smooth category, the two approaches are equivalent.
Indeed, given a section $\widehat{\omega}$, we can define a 2-form $\widehat{\omega}$ by $\widehat{\omega} \circ \mathrm{pr}_{\ker d\pi}$, where $\mathrm{pr}_{\ker d\pi}$ is the orthogonal projection to $\ker d\pi$ with respect to any Riemannian metric on $\mathcal{D}$.
\end{Rem}

\begin{Rem}
See \cite{Devalapurkar} for a related notion in algebraic geometry.
\end{Rem}

\section{Deformations of fusions and reductions}

The goal of this section is to show that Hamiltonian deformations are compatible with fusion products and reductions.

\subsection{Deformations of fusion products}

Recall that the \defn{internal fusion} \cite[Section 3]{alekseev1998lie} of a quasi-Hamiltonian space $(M, \omega, \mu, G \times G \times H)$ is the quasi-Hamiltonian space $(M, \tilde{\omega}, \tilde{\mu}, G \times H)$, where $\tilde{\omega} = \omega + \frac{1}{2}\langle\mu_1^*\theta^L \wedge \mu_2^*\theta^R\rangle$, $\tilde{\mu} = (\mu_1\mu_2, \mu_3)$, and $G$ acts diagonally.

Similarly, for a Hamiltonian space $(N, \sigma, \nu, G \times G \times H)$, we define the \defn{internal fusion} as the Hamiltonian space $(N, \sigma, \tilde{\nu}, G \times H)$, where $\tilde{\nu} = (\nu_1 + \nu_2, \nu_3)$ and $G$ acts diagonally.

One of the tools we will use to construct new examples of deformations is the following.

\begin{Thm}\label{jtwx49l3}
Let $(\D, \widehat{\omega}, \widehat{\mu}, G \times G \times H)$ be a Hamiltonian deformation of $(M, \omega, \mu, G \times G \times H)$ to $(N, \eta, \nu, G \times G \times H)$.
Then there is a natural Hamiltonian deformation of the internal fusion $(M, \tilde{\omega}, \tilde{\mu}, G \times H)$ to the internal fusion $(N, \sigma, \tilde{\nu}, G \times H)$.
\end{Thm}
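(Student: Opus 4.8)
The plan is to keep the same underlying data $(\D, \pi)$ and perform the quasi-Hamiltonian and Hamiltonian fusion constructions simultaneously over $\R$. Let $G \times H$ act on $\D$ through the diagonal embedding $G \times H \hookrightarrow G \times G \times H$, $(g, h) \mapsto (g, g, h)$; this action is smooth and $\pi$ remains invariant. Under the natural identification of the deformation space of a product with the fibered product of deformation spaces, write $\widehat{\mu} = (\widehat{\mu}_1, \widehat{\mu}_2, \widehat{\mu}_3) : \D \to \G \times_\R \G \times_\R \D(H, \{e\})$. I would then set
\[
\widehat{\tilde{\mu}} \coloneqq \bigl(m \circ (\widehat{\mu}_1, \widehat{\mu}_2),\, \widehat{\mu}_3\bigr), \qquad
\widehat{\tilde{\omega}} \coloneqq \widehat{\omega} + \tfrac{\pi}{2}\,\bigl\langle \widehat{\mu}_1^*\widehat{\theta}^L \wedge \widehat{\mu}_2^*\widehat{\theta}^R \bigr\rangle,
\]
where $m$ is the multiplication map of Proposition \ref{haurfcai} and $\widehat{\theta}^L, \widehat{\theta}^R$ are the 1-forms of Proposition \ref{1487h8tl}. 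The claim is that $(\D, \widehat{\tilde{\omega}}, \widehat{\tilde{\mu}}, G \times H)$ is the desired Hamiltonian deformation.

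First I would check the structural requirements of Definition \ref{swcoikg4}. Smoothness of $\widehat{\tilde{\mu}}$ follows from smoothness of $m$ (Proposition \ref{haurfcai}) and of $\widehat{\mu}$, while smoothness of $\widehat{\tilde{\omega}}$ follows from smoothness of $\widehat{\theta}^L, \widehat{\theta}^R$ (Proposition \ref{1487h8tl}). For equivariance, a direct inspection of the formula for $m$ shows that it intertwines the conjugation action on $\G \times_\R \G$ with that on $\G$: on the fibers $t \ne 0$ this is $gag^{-1}\,gbg^{-1} = g(ab)g^{-1}$, and on $t = 0$ it is $\Ad_g x + \Ad_g y = \Ad_g(x+y)$. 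Hence $\widehat{\tilde{\mu}}$ is $(G \times H)$-equivariant for the diagonal action.

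The heart of the argument is the fiberwise computation, and here the coefficient $\pi$ is the crucial point. Using $i_t^*\widehat{\mu} = \mu$, $i_t^*\widehat{\theta}^L = \tfrac1t\theta^L$, and $i_t^*\widehat{\theta}^R = \tfrac1t\theta^R$ for $t \ne 0$, together with $m((a,t),(b,t)) = (ab,t)$, I obtain $i_t^*\widehat{\tilde{\mu}} = (\mu_1\mu_2, \mu_3) = \tilde{\mu}$ and
\[
i_t^*\widehat{\tilde{\omega}} = \tfrac1t\omega + \tfrac{t}{2}\cdot\tfrac{1}{t^2}\langle \mu_1^*\theta^L \wedge \mu_2^*\theta^R\rangle = \tfrac1t\Bigl(\omega + \tfrac12\langle\mu_1^*\theta^L \wedge \mu_2^*\theta^R\rangle\Bigr) = \tfrac1t\tilde{\omega}.
\]
This matches the internal fusion of $(M, \tfrac1t\omega, \mu, G\times G\times H)$ precisely because the bilinear form on that fiber is $\tfrac1t\langle\cdot,\cdot\rangle$: the factor $t$ supplied by $\pi$ is exactly what reconciles the $\tfrac{1}{t^2}$ produced by the two copies of $\widehat{\theta}$ with the single $\tfrac1t$ appearing in the fusion form and its rescaled pairing. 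At $t = 0$, the factor $\pi$ vanishes while $\langle\widehat{\mu}_1^*\widehat{\theta}^L \wedge \widehat{\mu}_2^*\widehat{\theta}^R\rangle$ stays finite (being smooth by Proposition \ref{1487h8tl}), so $i_0^*\widehat{\tilde{\omega}} = i_0^*\widehat{\omega} = \sigma$; and since $m((x,0),(y,0)) = (x+y,0)$, we get $i_0^*\widehat{\tilde{\mu}} = (\nu_1+\nu_2, \nu_3) = \tilde{\nu}$. These are exactly the internal fusions on the Hamiltonian side, where the symplectic form is unchanged.

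Consequently the fibers $\D_t$ for $t \ne 0$ and $\D_0$ are literally the internal fusions of quasi-Hamiltonian (resp.\ Hamiltonian) spaces, so they satisfy the respective axioms by \cite[Section 3]{alekseev1998lie} with no new verification needed. The only real obstacle I anticipate is bookkeeping: pinning down the correct power of $t$ (equivalently, the coefficient $\pi$) so that the single form $\widehat{\tilde{\omega}}$ specializes simultaneously to the quasi-Hamiltonian fusion form with its rescaled pairing at $t \ne 0$ and to the unmodified symplectic form at $t = 0$. Once that coefficient is fixed, everything else is a direct consequence of Propositions \ref{haurfcai} and \ref{1487h8tl}.
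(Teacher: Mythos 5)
Your proposal is correct, and it follows the same strategy as the paper's (very terse) proof: keep the same $(\D, \pi)$, define the fused moment map as $(m \circ (\widehat{\mu}_1, \widehat{\mu}_2), \widehat{\mu}_3)$ using Proposition \ref{haurfcai}, and correct $\widehat{\omega}$ by a term built from the 1-forms of Proposition \ref{1487h8tl}. The one point of divergence is the coefficient of that correction term, and it is worth dwelling on: the paper writes $\widehat{\omega} + \tfrac{1}{2}\langle \widehat{\mu}_1^*\widehat{\theta}^L \wedge \widehat{\mu}_2^*\widehat{\theta}^R\rangle$, whereas you write $\widehat{\omega} + \tfrac{\pi}{2}\langle \widehat{\mu}_1^*\widehat{\theta}^L \wedge \widehat{\mu}_2^*\widehat{\theta}^R\rangle$ with $\pi$ the projection $\D \to \R$ (i.e., the function $t$), and your version is the one that actually satisfies Definition \ref{swcoikg4}. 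Indeed, since $i_t^*\widehat{\theta}^L = \tfrac{1}{t}\theta^L$ and $i_t^*\widehat{\theta}^R = \tfrac{1}{t}\theta^R$, the paper's form restricts on the fiber over $t \ne 0$ to $\tfrac{1}{t}\omega + \tfrac{1}{2t^2}\langle\mu_1^*\theta^L\wedge\mu_2^*\theta^R\rangle$, which is not $\tfrac{1}{t}\tilde{\omega}$ (the internal fusion form of $(M, \tfrac{1}{t}\omega, \mu)$ with respect to the rescaled pairing $\tfrac{1}{t}\langle\cdot,\cdot\rangle$ requires the coefficient $\tfrac{1}{2t}$, not $\tfrac{1}{2t^2}$); and on the zero fiber, since $i_0^*\widehat{\theta}^L = i_0^*\widehat{\theta}^R$ is the identity, it restricts to $\sigma + \tfrac{1}{2}\langle d\nu_1 \wedge d\nu_2\rangle$, which differs from $\sigma$ in general. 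Your factor of $\pi$ supplies exactly the power of $t$ that reconciles the two copies of $\tfrac{1}{t}$ coming from $\widehat{\theta}^L$ and $\widehat{\theta}^R$ with the single $\tfrac{1}{t}$ in the rescaled fusion form, and it kills the correction term at $t = 0$, as the Hamiltonian fusion (which leaves the symplectic form unchanged) demands. So what you call ``bookkeeping'' is in fact the substance of the theorem: your fiberwise verification both supplies the computation the paper leaves implicit and corrects what appears to be a typo in the paper's displayed 2-form; the remaining checks (smoothness via Propositions \ref{haurfcai} and \ref{1487h8tl}, equivariance of $m$ under conjugation, and the fact that the fibers are then literal fusions so no new axioms need verifying) are handled correctly in your write-up.
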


\begin{proof}
We keep the same manifold $\D$ with the submersion $\pi : \D \to \R$.
By Proposition \ref{1487h8tl} and Proposition \ref{haurfcai}, the section $\widehat{\omega} + \frac{t}{2}\langle \widehat{\mu}_1^* \widehat{\theta}^L \wedge \widehat{\mu}_2^* \widehat{\theta}^R \rangle$ and the map $(m \circ (\widehat{\mu}_1 \times \widehat{\mu}_2), \widehat{\mu}_3) : \D \to \G \times \H$ have the desired properties.
\end{proof}

Recall also that given two quasi-Hamiltonian spaces $(M_i, \omega_i, G \times H_i)$ for $i = 1, 2$, their \emph{fusion} is the quasi-Hamiltonian space $M_1 \circledast M_2$ obtained by fusing the two $G$-factors of the quasi-Hamiltonian $(G \times H_1 \times G \times H_2)$-space $M_1 \times M_2$.
Similarly, we define the fusion $N_1 \circledast N_2$ of two Hamiltonian spaces $(N_i, \sigma_i, G \times H_i)$ for $i = 1, 2$ as $N_1 \times N_2$ with the action of $G \times H_1 \times H_2$ given by the diagonal copy of $G$.

\begin{Cor}\label{6mj8ptsk}
If the quasi-Hamiltonian spaces $(M_i, \omega_i, \mu_i, G \times H_i)$ admit Hamiltonian deformations to Hamiltonian spaces $(N_i, \sigma_i, \nu_i, G \times H_i)$, then $M_1 \circledast M_2$ has a natural Hamiltonian deformation to $N_1 \circledast N_2$.
\end{Cor}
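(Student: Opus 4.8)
The plan is to reduce the statement to the internal-fusion result of Theorem \ref{jtwx49l3} by first building a ``product'' of the two given deformations. Write $(\D_i, \widehat{\omega}_i, \widehat{\mu}_i, G \times H_i)$ for the Hamiltonian deformation of $M_i$ to $N_i$, with submersion $\pi_i : \D_i \to \R$. First I would form the fiber product $\D := \D_1 \times_\R \D_2$ along $\pi_1$ and $\pi_2$. Since both $\pi_i$ are submersions they are transverse, so $\D$ is a smooth manifold and the common projection $\pi : \D \to \R$ is again a submersion with $[0,1] \subset \operatorname{im}\pi$. I would let $G \times H_1 \times G \times H_2$ act factorwise, equip $\D$ with the 2-form $\widehat{\omega} := \pr_1^*\widehat{\omega}_1 + \pr_2^*\widehat{\omega}_2$, and take the moment map $\widehat{\mu} := (\widehat{\mu}_1 \circ \pr_1,\, \widehat{\mu}_2 \circ \pr_2)$.

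Next I would verify that this data is a Hamiltonian deformation of the product quasi-Hamiltonian space $(M_1 \times M_2, \omega_1 + \omega_2, (\mu_1, \mu_2), G \times H_1 \times G \times H_2)$ to the product Hamiltonian space $(N_1 \times N_2, \sigma_1 + \sigma_2, (\nu_1, \nu_2), G \times H_1 \times G \times H_2)$. For $t \ne 0$ the fiber $\pi^{-1}(t) = \pi_1^{-1}(t) \times \pi_2^{-1}(t)$ carries the scaled form $\frac{1}{t}(\omega_1 + \omega_2)$ and moment map $(\mu_1, \mu_2)$, which is exactly the product quasi-Hamiltonian structure; the zero fiber is $N_1 \times N_2$ with $\sigma_1 + \sigma_2$ and $(\nu_1, \nu_2)$, the product Hamiltonian structure. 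The point that requires care is that the target of $\widehat{\mu}$ should be the deformation space $\D(G \times H_1 \times G \times H_2, \{e\})$; here I would use the canonical identification $\D(K_1 \times K_2, \{e\}) \cong \D(K_1, \{e\}) \times_\R \D(K_2, \{e\})$, which follows directly from the three defining properties of the deformation space, so that $\widehat{\mu}$ indeed lands in the correct space and is equivariant for the conjugation action.

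Finally, after reordering the factors as $G \times G \times (H_1 \times H_2)$, I would observe that by definition the fusion $M_1 \circledast M_2$ is the internal fusion of the two $G$-factors of $M_1 \times M_2$, and likewise $N_1 \circledast N_2$ is the internal fusion of $N_1 \times N_2$. Applying Theorem \ref{jtwx49l3} to the product deformation $\D$---with the two copies of $G$ as the fused factors and $H := H_1 \times H_2$---then yields the desired Hamiltonian deformation of $M_1 \circledast M_2$ to $N_1 \circledast N_2$. I expect the main obstacle to be the verification in the middle step that the fiber product genuinely satisfies Definition \ref{swcoikg4}, and in particular that the product moment map is smooth into the correct deformation space; once this is established, the fusion step is automatic from Theorem \ref{jtwx49l3}.
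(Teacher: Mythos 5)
Your proposal is correct and follows exactly the route the paper intends: the corollary is stated without proof precisely because it is meant to follow by forming the product of the two given deformations and applying Theorem \ref{jtwx49l3} to fuse the two $G$-factors. Your explicit construction of that product as the fiber product $\D_1 \times_\R \D_2$, together with the identification $\D(K_1 \times K_2, \{e\}) \cong \D(K_1,\{e\}) \times_\R \D(K_2,\{e\})$ for the moment-map target, supplies the details the paper leaves implicit, and both checks are valid.
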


\subsection{Deformations of reductions}

For the purpose of this section, suppose that the group $G$ is compact.
Let $(M, \omega, \mu, G \times H)$ be a quasi-Hamiltonian space.
Recall \cite[Section 5]{alekseev1998lie} that for any $f \in G$ such that $G_f \coloneqq \{g \in G : gfg^{-1} = f\}$ acts freely on $\mu_1^{-1}(f)$, the quotient
\[
M_f \coloneqq \mu_1^{-1}(f) / G_f
\]
is a quasi-Hamiltonian $H$-space whose 2-form descends from the pullback of $\omega$.

Similarly, if $(N, \sigma, \nu, G \times H)$ is a Hamiltonian space and $x \in \g$ is such that $G_x \coloneqq \{g \in G : \Ad_gx = x\}$ acts freely on $\nu^{-1}(x)$, then the Marsden--Weinstein--Meyer \cite{marsden1974reduction,MR331427} symplectic reduction $N_x \coloneqq \nu_1^{-1}(x)/G_x$ is a Hamiltonian $H$-space.

\begin{Thm}\label{oi2khfoe}
Let $(\D, \widehat{\omega}, \widehat{\mu}, G \times H)$ be a Hamiltonian deformation of $(M, \omega, \mu, G \times H)$ to $(N, \eta, \nu, G \times H)$.
Let $x \in \g$ and let $I \subset \R$ be a neighborhood of $[0, 1]$.
Suppose that 
\begin{enumerate}[label={\textup{(\roman*)}}]
\item\label{jkmndrxc}
$d\exp_{tx}$ is invertible for all $t \in I$,
\item\label{pnn277gb}
$G_x$ acts freely on $\nu_1^{-1}(x)$ and $\mu_1^{-1}(e^{tx})$ for all $t \in I \setminus \{0\}$, and
\item\label{i4svqrm1}
the curve $\phi_x : I \to \G$, $t \mapsto \phi(x, t)$ lifts to a curve $\widehat{\phi}_x : I \to \D$ such that $\widehat{\mu}_1 \circ \widehat{\phi}_x = \phi_x$.
\end{enumerate}
Set $\mathcal{X} \coloneqq \phi(\{x\} \times I) = \{(e^{tx}, t) : t \in I \setminus \{0\}\} \sqcup \{(x, 0)\}$.
Then $\widehat{\mu}^{-1}(\mathcal{X}) / G_x$ has the structure of a Hamiltonian deformation from $M_{e^x}$ to $N_x$.
\end{Thm}

\begin{proof}
Condition \ref{jkmndrxc} implies that $\mathcal{X}$ is a smooth submanifold of $\G$.
We show that $\mathcal{X}$ is transverse to $\widehat{\mu}_1$.
Let $t \in I \setminus \{0\}$ and $p \in \D$ be such that $\widehat{\mu}_1(p) = (e^{tx}, t)$.
Let $(y, u) \in T_{(e^{tx}, t)}\G = \g \times \R$.
Since $e^{tx}$ is a regular value of $\mu_1$, there exists $v \in \ker d\pi_p$ such that $d\widehat{\mu}_1(v) = (y - ux, 0)$.
It follows that $(y, u) = d\widehat{\mu}_1(v) + d\phi_{(x, t)}(u)$, proving transversality away from the zero fiber.
Similarly, consider $p \in \D$ such that $\widehat{\mu}_1(p) = (x, 0)$ and let $(y, u) \in T_{(x, 0)}(\g \times \R) = \g \times \R$.
We must find $v \in T_p\D$ and $w \in \R$ such that $d\phi_{(x, 0)}(y, u) = d\widehat{\mu}_1(v) + d\phi_{(x, 0)}(0, w)$.
Setting $w = 0$, this is equivalent to $d\widehat{\mu}_1(v) = d\phi_{(x, 0)}(y, 0) = y \in T_{(x, 0)}(\g \times \{0\}) = \g$.
We can then use that $\nu_1$ is regular at $x$ to find $v \in \ker d\pi_p$ such that $d\widehat{\mu}_1(v) = y$.
Hence, $\iota : \widehat{\mu}^{-1}(\mathcal{X}) \hookrightarrow \D$ is a smooth submanifold and $\overline{\D} \coloneqq \widehat{\mu}^{-1}(\mathcal{X}) / G_x$ has the structure of a smooth manifold such that the quotient map is a smooth submersion.

Since $\pi : \D \to \R$ is $G$-invariant, it descends to a smooth map $\overline{\pi} : \overline{\D} \to \R$.
To show that $\overline{\pi}$ is a submersion, let $u \in T_{\overline{\pi}(p)}\R$ for $p \in \widehat{\mu}_1^{-1}(\mathcal{X})$.
Let $v = d\widehat{\phi}_x(u)$.
Then $d\widehat{\mu}_1(v) = d\phi_{\overline{\pi}(p)}(u) \in T\mathcal{X}$, so $v$ is a tangent vector of $\widehat{\mu}_1^{-1}(\mathcal{X})$.
Let $\pi_\G : \G \to \R$ be the submersion of $\G$.
Then $d\pi(v) = d\pi_\G(d\widehat{\mu}_1(v)) = d\pi_\G(d\phi_x(u)) = u$.
It follows that $\pi|_{\widehat{\mu}_1^{-1}(\mathcal{X})}$ is a submersion, and hence so is $\overline{\pi}$.

Take any $G$-invariant Riemannian metric on $\D$ and let $\pr_{\ker d\pi} : T\D \to \ker d\pi$ be the corresponding orthogonal projection.
Then the 2-form $\eta \coloneqq \widehat{\omega} \circ \pr_{\ker d\pi}$ is $G$-invariant and basic with respect to the action of $G_x$ (i.e.\ $i_{v}\widehat{\omega}' = 0$ for every vector field $v$ on $\D$ generated by the $G_x$-action).
It follows that $\eta$ descends to a $2$-form $\bar{\eta}$ on $\overline{\D}$ whose restriction to each fiber is the reduced form.
We may then define $\widehat{\bar{\omega}}$ as the restriction of $\bar{\eta}$ to $\ker d\bar{\pi}$.

The moment map $\widehat{\mu}_2$ is $G$-invariant and hence descends to $\overline{\D}$.
Similarly, the $H$-action on $\mathcal{D}$ descends to $\overline{\D}$ since it commutes with the $G$-action.
\end{proof}

\section{Examples of Hamiltonian deformations}

Throughout this section, $G$ is a Lie group equipped with an invariant non-degenerate symmetric bilinear form $\langle \cdot, \cdot \rangle$ on its Lie algebra $\g$.
We also let $\G$ be the deformation of $G$ to $\g$ as in \eqref{reboyrqr}.

\subsection{The double of a Lie group}
Recall that the \defn{double} \cite[Section 3.2]{alekseev1998lie} of $G$ is the quasi-Hamiltonian space $(D(G), \omega, \mu, G \times G)$, where $D(G) = G \times G$ with the action of $G \times G$ by
\begin{equation}\label{so5i20z6}
(g,h)\cdot(\alpha,\beta) = (g\alpha h^{-1},h\beta h^{-1}),
\end{equation}
the 2-form
\[
\omega \coloneqq \frac{1}{2}\langle \text{Ad}_{\beta}\alpha^{*}\theta^{L} \wedge \alpha^{*}\theta^{L} \rangle + \frac{1}{2} \langle \alpha^{*}\theta^{L} \wedge (\beta^{*}\theta^{L} + \beta^{*}\theta^{R}) \rangle,
\]
where $\alpha, \beta : G \times G \to G$ also denote the two projections, and the moment map
\[
\mu : G\times G \longrightarrow G \times G,
\quad
(\alpha,\beta) \longmapsto (\alpha\beta\alpha^{-1},\beta^{-1}).
\]
Recall also that the cotangent bundle $T^*G$ has a natural structure of a Hamiltonian $G \times G$-space with respect to the canonical symplectic form and the action by left and right translations.
By using the trivialization by left translations $T^*G = G \times \g^*$ and identifying $\g^*$ with $\g$ via the bilinear form, this Hamiltonian space takes the form $(G \times \g, \sigma, \nu, G \times G)$, where the action is
\begin{equation}\label{wx1ma5ha}
(g, h) \cdot (\alpha, x) = (g\alpha h^{-1}, \Ad_hx)
\end{equation}
the symplectic form is
\begin{equation}\label{b8cj1rl6}
\sigma_{(\alpha, x)}((y_1, z_1), (y_2, z_2)) = \langle y_1, z_2 \rangle - \langle y_2, z_1 \rangle + \langle x, [y_1, y_2] \rangle,
\end{equation}
for $(y_i, z_i) \in \g \times \g \cong T_{(\alpha, x)}(G \times \g)$, and the moment map is
\[
\nu(\alpha, x) = (\Ad_\alpha x, -x).
\]

\begin{Thm}\label{ikr142qd}
The set $\D \coloneqq G \times \G$ has the structure of a Hamiltonian deformation from the double $D(G)$ to the cotangent bundle $T^*G$.
\end{Thm}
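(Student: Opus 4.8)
The plan is to equip $\D = G \times \G$ with the four pieces of data required by Definition \ref{swcoikg4} and to check that each restricts correctly to the double on the fibers $\pi^{-1}(t)$ for $t \ne 0$ and to $T^*G$ on $\pi^{-1}(0)$. For the submersion I take $\pi$ to be the projection $\D \to \G$ followed by the submersion $\G \to \R$; its image is all of $\R$, so it contains $[0,1]$. For the action of $G \times G$ I combine left--right translation on the $G$-factor with the conjugation action on $\G$,
\[
(g, h) \cdot (\alpha, \gamma) = (g \alpha h^{-1}, h \cdot \gamma),
\]
which is smooth by Proposition \ref{xppf45jj} and manifestly $\pi$-invariant. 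Writing $\gamma = (\beta, t)$ for $t \ne 0$ and $\gamma = (x, 0)$ for $t = 0$, it restricts to $(g\alpha h^{-1}, h\beta h^{-1})$ and to $(g\alpha h^{-1}, \Ad_h x)$, matching \eqref{so5i20z6} and \eqref{wx1ma5ha}.

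For the moment map I take $\widehat{\mu} : \D \to \G \times_\R \G$ given by $\widehat{\mu}(\alpha, \gamma) = (\alpha \cdot \gamma, \operatorname{inv}(\gamma))$, where $\alpha \cdot \gamma$ is the conjugation action of $\alpha \in G$ on $\gamma$ and $\operatorname{inv} : \G \to \G$ sends $(\beta, t) \mapsto (\beta^{-1}, t)$ and $(x, 0) \mapsto (-x, 0)$. The first component is smooth by Proposition \ref{xppf45jj}, and $\operatorname{inv}$ is smooth because $\operatorname{inv}(\phi(x,t)) = \phi(-x, t)$ with $\phi$ as in Lemma \ref{15sdkw85}. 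On the fibers this reads $(\alpha\beta\alpha^{-1}, \beta^{-1})$ and $(\Ad_\alpha x, -x)$, which are exactly $\mu$ and $\nu$. Equivariance with respect to the conjugation action on $\G \times_\R \G$ follows since conjugation is associative in the first slot and $\operatorname{inv}$ commutes with conjugation in the second.

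The substance of the proof is the construction of $\widehat{\omega}$. Write $\pr_1 : \D \to G$ and $\pr_2 : \D \to \G$ for the projections. The two ``off-diagonal'' terms of the double's form deform directly via Proposition \ref{1487h8tl}: the $2$-form $\tfrac12\langle \pr_1^*\theta^L \wedge (\pr_2^*\widehat{\theta}^L + \pr_2^*\widehat{\theta}^R) \rangle$ restricts to $\tfrac1t \cdot \tfrac12\langle \alpha^*\theta^L \wedge (\beta^*\theta^L + \beta^*\theta^R)\rangle$ for $t \ne 0$ and, since $i_0^*\widehat{\theta}^L = i_0^*\widehat{\theta}^R = \operatorname{id}$, to $\langle y_1, z_2\rangle - \langle y_2, z_1\rangle$ at $t = 0$. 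The term $\tfrac12\langle \Ad_\beta \alpha^*\theta^L \wedge \alpha^*\theta^L\rangle$ is the delicate one: it carries no factor of $\tfrac1t$, so to interpolate it I first use that $\langle \theta^L \wedge \theta^L\rangle = 0$ (by symmetry of $\langle\cdot,\cdot\rangle$) to replace $\Ad_\beta$ by $\Ad_\beta - \operatorname{id}$, and then divide by $t$. Smoothness is supplied by the \emph{deformed adjoint}
\[
\Psi : \G \too \operatorname{End}(\g), \qquad \Psi(\beta, t) = \tfrac1t(\Ad_\beta - \operatorname{id}) \ (t \ne 0), \qquad \Psi(x, 0) = \operatorname{ad}_x,
\]
which is smooth because $\Psi(\phi(x,t)) = \sum_{n \ge 1} \tfrac{t^{n-1}}{n!}\operatorname{ad}_x^n$ in the coordinates of Lemma \ref{15sdkw85}. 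I then set
\[
\widehat{\omega} = \tfrac12\langle (\pr_2^*\Psi)\,\pr_1^*\theta^L \wedge \pr_1^*\theta^L \rangle + \tfrac12\langle \pr_1^*\theta^L \wedge (\pr_2^*\widehat{\theta}^L + \pr_2^*\widehat{\theta}^R) \rangle.
\]

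It remains to verify the fiber restrictions of the first term. For $t \ne 0$ it equals $\tfrac1t \cdot \tfrac12\langle \Ad_\beta \alpha^*\theta^L \wedge \alpha^*\theta^L\rangle$, so $i_t^*\widehat{\omega} = \tfrac1t\omega$ and $\D_t \cong (D(G), \tfrac1t\omega, \mu, G\times G)$, the double for the rescaled form $\tfrac1t\langle\cdot,\cdot\rangle$. At $t = 0$ the first term becomes $\tfrac12\langle \operatorname{ad}_x y \wedge y\rangle$, which on tangent vectors $(y_i, z_i)$ evaluates to $\langle x, [y_1, y_2]\rangle$ after applying the invariance of $\langle\cdot,\cdot\rangle$; adding the off-diagonal contribution recovers \eqref{b8cj1rl6}, so $\D_0 \cong (T^*G, \sigma, \nu, G\times G)$. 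I expect this final $t = 0$ computation --- extracting $\langle x, [y_1,y_2]\rangle$ from the limit of the rescaled $\Ad_\beta$ term --- together with establishing the smoothness of $\Psi$, to be the main technical hurdle; everything else is an unwinding of definitions.
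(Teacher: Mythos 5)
Your proposal is correct, and while your bookkeeping data agree with the paper's, your construction of $\widehat{\omega}$ takes a genuinely different route. The paper uses the same $\pi$, the same $G \times G$-action, and the same $\widehat{\mu}$ (smoothness checked through the chart $\phi$ of Lemma \ref{15sdkw85}, essentially equivalent to your argument via $\operatorname{inv}$). But for the $2$-form the paper has no global formula: near the zero fiber it sets $\eta = \frac{1}{t}\pr^*\omega$, where $\pr$ is the fiberwise projection determined by the horizontal distribution coming from the chart $\psi = \mathrm{Id}_G \times \phi$, computes $\psi^*\eta$ explicitly, checks that the resulting expression (whose key entry $\langle u_1, \frac{\Ad_{e^{-tx}} - \Ad_{e^{tx}}}{2t} u_2 \rangle$ plays exactly the role of your $\Psi$) extends smoothly across $t = 0$ with value $\sigma$, and then glues this local form with the obvious one on $G \times G \times \R^\times$ via a partition of unity. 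You instead observe that $\langle \theta^L \wedge \theta^L \rangle = 0$ lets you replace $\Ad_\beta$ by $\Ad_\beta - \operatorname{id}$ at no cost, so that the problematic term of the double's form becomes divisible by $t$, and you absorb the division into the smooth bundle map $\Psi : \G \to \operatorname{End}(\g)$; combined with Proposition \ref{1487h8tl} this yields one closed, chart-independent formula for $\widehat{\omega}$ on all of $\D$, with the fiber conditions $i_t^*\widehat{\omega} = \frac{1}{t}\omega$ and $i_0^*\widehat{\omega} = \sigma$ verified by direct substitution (your invariance computation extracting $\langle x, [y_1, y_2] \rangle$ is right). Your smoothness argument for $\Psi$ is sound: $\Psi(\phi(x,t)) = \sum_{n \ge 1} \frac{t^{n-1}}{n!} \operatorname{ad}_x^n = g(t \operatorname{ad}_x)\operatorname{ad}_x$ with $g(z) = (e^z - 1)/z$ entire, and smoothness away from the zero fiber is clear, so this is the same style of argument as Proposition \ref{1487h8tl} but more elementary. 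What your approach buys is the elimination of both the partition of unity and the limiting computation; what the paper's method buys is uniformity, since the same pull-back-and-extend technique is reused verbatim for conjugacy classes in Theorem \ref{thnlzgh9}, where no such global algebraic formula is available. Two harmless loose ends in your write-up: shrink the open set $U$ of Lemma \ref{15sdkw85} to be symmetric under $x \mapsto -x$ so that the identity $\operatorname{inv}(\phi(x,t)) = \phi(-x,t)$ proves smoothness of $\operatorname{inv}$ on an honest neighborhood of the zero fiber, and note explicitly that $\G \times_\R \G$, where your $\widehat{\mu}$ takes values, is canonically the deformation space of $G \times G$ demanded by Definition \ref{swcoikg4}.
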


\begin{proof}
Let $\pi : \D \to \R$ be the composition of the projections $G \times \G \to \G$ and $\G \to \R$.
Note that $\D_t = D(G)$ for $t \ne 0$ and $\D_0 = G \times \g = T^*G$.
By Proposition \ref{xppf45jj}, the actions \eqref{so5i20z6} and \eqref{wx1ma5ha} induce a smooth action of $G \times G$ on $\D$.
Let $\widehat{\mu} : \D \to \G \times \G$ be given by $\mu$ on $G \times G \times \{t\}$ for $t \ne 0$ and by $\nu$ on $G \times \g \times \{0\}$.
Smoothness of $\widehat{\mu}$ follows from Lemma \ref{15sdkw85} and the commutativity of the following diagram
\[
\begin{tikzcd}
G \times \g \times \R \arrow{r} \arrow[swap]{d}{\mathrm{Id}_G \times \phi} & \g \times \R \times \g \times \R \arrow{d}{\phi \times \phi} & (\alpha, x, t) \arrow[mapsto]{r} & (\Ad_\alpha x, t, -x, t)\\
\D \arrow{r}{\widehat{\mu}} & \G \times \G.
\end{tikzcd}
\]
It remains to show that the section $\widehat{\omega} \coloneqq \frac{1}{t}\omega$ of $\wedge^2 (\ker d\pi)^*$ over the open set $G \times G \times \R^\times \subset \D$ extends smoothly to $\D$ in such a way that $\widehat{\omega}|_{\D_0} = \sigma$, where $\sigma$ is the symplectic form \eqref{b8cj1rl6}.
Let $\phi$ and $U$ be as in Lemma \ref{15sdkw85} and let $V \coloneqq \phi(U) \subset \G$.
The map
\[
\psi = \mathrm{Id}_G \times \phi : G \times \g \times \R \too \D
\]
then restricts to a diffeomorphism $\psi : G \times U \to G \times V$ such that $\pi \circ \psi = \pr_\R$.
Note also that $\psi$ restricts to the identity map on $G \times \g \times \{0\}$.
It then suffices to show that the section $\psi^*\widehat{\omega}$ of $\wedge^2 \ker d\pr_\R$ extends smoothly to $\sigma$ at $t = 0$.
Let $(\alpha, x, t) \in G \times \g \times \R^\times$ and let $(y_i, z_i) \in \ker (d\pr_\R)_{(\alpha, x, t)} = \g \times \g$ for $i = 1, 2$.
Then
\begin{align*}
&(\psi^*\widehat{\omega})_{(\alpha, x, t)}((y_1, z_1), (y_2, z_2)) \\
&=
\frac{1}{t}\omega_{(\alpha, e^{tx})}((y_1, d\exp_{tx}(tz_1)), (y_2, d\exp_{tx}(tz_2))) \\
&=
\frac{1}{2t}\left(\langle \Ad_{e^{tx}}y_1, y_2 \rangle - \langle \Ad_{e^{tx}} y_2, y_1 \rangle\right) \\
&\quad + \frac{1}{2t}\left(\langle y_1, d\exp_{tx}(tz_2) + \Ad_{e^{tx}}d\exp_{tx}(tz_2) \rangle - \langle y_2, d\exp_{tx}(tz_1) + \Ad_{e^{tx}}d\exp_{tx}(tz_1) \rangle \right) \\
&=
\langle y_1, \tfrac{\Ad_{e^{-tx}} - \Ad_{e^{tx}}}{2t}y_2 \rangle  + \frac{1}{2}(\langle y_1, d\exp_{tx}(z_2) + \Ad_{e^{tx}} d\exp_{tx}(z_2) \rangle - \langle y_2, d\exp_{tx}(z_1) + \Ad_{e^{tx}} d\exp_{tx}(z_1)\rangle),
\end{align*}
which indeed extends smoothly to $\sigma$ as $t \to 0$.
\end{proof}

\subsection{Conjugacy classes}

Recall that every conjugacy class $C_a \coloneqq \{gag^{-1} : g \in G\}$ for $a \in G$ is a quasi-Hamiltonian space with respect to the conjugation action \cite[Section 3.1]{alekseev1998lie}.
The moment map is the inclusion $\mu : C_a \hookrightarrow G$ and the 2-form is given by
\[
\omega_f(X_{v_1}, X_{v_2}) = \dfrac{1}{2}\left(\langle v_1, \Ad_f v_2 \rangle - \langle v_2, \Ad_f v_1 \rangle \right)
\]
for all $f \in C_a$ and vector fields $X_{v_1}, X_{v_2}$ generated by $v_1, v_2 \in \g$.

Similarly, every adjoint orbit $\O_x \coloneqq \{\Ad_g x : g \in G\}$ for $x \in \g$ is a Hamiltonian space with respect to the adjoint action, the inclusion $\nu : \O \hookrightarrow \g \cong \g^*$, and the 2-form
\begin{equation}\label{mz13xpwj}
\sigma_y(X_{v_1}, X_{v_2}) = -\langle y, [v_1, v_2] \rangle
\end{equation}
for all $y \in \O_x$ and $X_{v_1}, X_{v_2}$ the vector fields generated by $v_1, v_2 \in \g$.

Let $x \in \g$ be such that $d\exp_{tx}$ is invertible for all $t \in [0, 1]$.
We will construct a Hamiltonian deformation from $C_{e^x}$ to $\O_x$.
Let $I \subset \R$ be the open interval such that $d\exp_{tx}$ is invertible for all $t \in I$.
Let $\C$ be the subset of $\G = (G \times \R^\times) \sqcup (\g \times \{0\})$ given by the union of $C_{e^{tx}} \times \{t\}$ for $t \in I \setminus \{0\}$ and $\O_x \times \{0\}$.

\begin{Thm}\label{thnlzgh9}
The set $\C$ has the structure of a Hamiltonian deformation from $C_{e^x}$ to $\O_x$.
\end{Thm}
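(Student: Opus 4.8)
The plan is to follow the blueprint of Theorem~\ref{ikr142qd}, replacing the double $D(G)$ by the conjugacy class and $T^*G$ by the coadjoint orbit. The starting observation is that $\C$ is the image of $\O_x \times I$ under the map $\phi$ of Lemma~\ref{15sdkw85}: since $\exp(t\Ad_g x) = g\,e^{tx}\,g^{-1}$, we have $\phi(\Ad_g x, t) = (g e^{tx} g^{-1}, t)$ for $t \ne 0$ and $\phi(\Ad_g x, 0) = (\Ad_g x, 0)$, so $\phi$ maps $\O_x \times (I\setminus\{0\})$ onto $\bigsqcup_{t} C_{e^{tx}}\times\{t\}$ and $\O_x \times \{0\}$ onto $\O_x \times \{0\}$. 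First I would record that $d\exp_{ty}$ is invertible for every $y \in \O_x$ and $t \in I$: writing $y = \Ad_g x$ and differentiating the identity $\exp\circ\Ad_g = \mathrm{conj}_g\circ\exp$ shows that $d\exp_{t\Ad_g x}$ is conjugate to $d\exp_{tx}$, hence invertible by hypothesis. Together with the differential computation in Lemma~\ref{15sdkw85}, this makes $\phi|_{\O_x\times I}$ an immersion, and I would argue it is a diffeomorphism onto $\C$, giving $\C$ a smooth manifold structure with $\C \cong \O_x \times I$. Under this identification $\pi$ becomes the projection to $I \subset \R$, a submersion with $[0,1]\subset\operatorname{im}\pi$.

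Next I would install the remaining data, all by restriction from $\G$. The $G$-action on $\C$ is the conjugation action of Proposition~\ref{xppf45jj}, which is smooth and corresponds under $\phi$ to the adjoint action on $\O_x$; the moment map $\widehat{\mu} : \C \hookrightarrow \G$ is simply the inclusion, which is smooth and $G$-equivariant. On the fibers this gives $i_t^*\widehat{\mu}$ the inclusion $C_{e^{tx}} \hookrightarrow G$ for $t\ne 0$ and $i_0^*\widehat{\mu}$ the inclusion $\O_x \hookrightarrow \g\cong\g^*$, which is exactly the Hamiltonian moment map $\nu$ of the coadjoint orbit.

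The heart of the proof is the construction of the 2-form $\widehat{\omega}$, which I would carry out exactly as in Theorem~\ref{ikr142qd}. Using the distribution $E_{\phi(y,t)} = d\phi_{(y,t)}(\{0\}\times T_t\R)$ and the induced projection $\pr$ onto $\ker d\pi = T_{e^{ty}}C_{e^{tx}}$, I would set $\eta = \frac{1}{t}\pr^*\omega$ on a neighbourhood of the zero fiber (where $\omega$ denotes the conjugacy-class 2-form) and show that $\phi^*\eta$ extends smoothly across $t=0$. Concretely, after pulling back, the fibrewise form on $\O_x$ is $\frac{1}{2t}\bigl(\langle w_1, \Ad_{e^{ty}} w_2\rangle - \langle w_2, \Ad_{e^{ty}} w_1\rangle\bigr)$; expanding $\Ad_{e^{ty}} = 1 + t\,\mathrm{ad}_y + O(t^2)$ and using invariance of $\langle\cdot,\cdot\rangle$, the $\frac{1}{t}$-singularity cancels and the limit at $t=0$ is $\tfrac12(\langle w_1,[y,w_2]\rangle - \langle w_2,[y,w_1]\rangle) = -\langle y,[w_1,w_2]\rangle$, i.e.\ the orbit form $\sigma$ of \eqref{mz13xpwj}. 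Gluing $\eta$ with the form $\frac{1}{t}\omega$ on $C_{e^{tx}}\times\R^\times$ via a partition of unity yields $\widehat{\omega}$ with $i_t^*\widehat{\omega} = \frac{1}{t}\omega$ for $t\ne 0$ and $i_0^*\widehat{\omega} = \sigma$.

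The main obstacle is precisely this smooth extension across $t=0$: it is where the hypothesis that $d\exp_{tx}$ be invertible on $[0,1]$ is indispensable, both to make $\phi|_{\O_x\times I}$ a diffeomorphism (so that $\C$ is a manifold and $\pr$ is defined) and to guarantee that the $\frac{1}{t}$-rescaled conjugacy-class forms converge rather than blow up. Granting it, the conclusion follows: the zero fiber $\D_0 = (\O_x, \sigma, \nu, G)$ is the coadjoint orbit as a Hamiltonian space, while for $t \ne 0$ the fiber $\D_t = (C_{e^{tx}}, \frac{1}{t}\omega, \mathrm{incl}, G)$ is the quasi-Hamiltonian conjugacy class, equal to $C_{e^x}$ at $t=1$; under the reparametrisation $y \mapsto e^{ty}$ furnished by $\phi$ this realises the fiber condition of Definition~\ref{swcoikg4}, exhibiting $\C$ as a Hamiltonian deformation from $C_{e^x}$ to $\O_x$.
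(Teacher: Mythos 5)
Your proposal is correct and takes essentially the same route as the paper's own proof: identify $\C$ with $\O_x \times I$ via the map $\phi$ of Lemma \ref{15sdkw85}, take $\widehat{\mu}$ to be the inclusion $\C \hookrightarrow \G$, and build $\widehat{\omega}$ by rescaling the conjugacy-class forms through the projection along the distribution $d\phi(\{0\} \times T\R)$, with the same limit computation $\langle v_1, \operatorname{ad}_y v_2 \rangle = -\langle y, [v_1, v_2]\rangle$ recovering \eqref{mz13xpwj}. The only minor deviations are harmless: your explicit conjugation argument for the invertibility of $d\exp_{ty}$ makes precise a point the paper leaves implicit, while your partition-of-unity gluing step is actually unnecessary here, since $\phi(\O_x \times I)$ already covers all of $\C$ and the paper's $\eta$ extends globally.
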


\begin{proof}
The map $\phi : \g \times \R \to \G$ of Lemma \ref{15sdkw85} restricts to a diffeomorphism from $\g \times I$ to an open subset of $\G$ containing $\C$.
Since $\phi^{-1}(\C) = \O_x \times I$, we see that $\C$ is a smooth submanifold of $\G$.
Moreover, this shows that the composition $\pi_\C : \C \hookrightarrow \G \to \R$ is a smooth submersion whose image is $I$, and hence contains $[0, 1]$.
The diffeomorphism $\phi : \O_x \times I \to \C$ is equivariant with respect to conjugation, so this also shows that the conjugation action on $\C$ is smooth.
We let $\widehat{\mu} : \C \hookrightarrow \G$ be the inclusion map.
It remains to show that the section $\widehat{\omega} \coloneqq \frac{1}{t}\omega_{e^{tx}}$ of $\wedge^2 (\ker d\pi_\C)^*$ over $\pi_\C^{-1}(\R^\times)$ extends smoothly to $\C$ in such a way that $\widehat{\omega}|_{\C_0}$ is the symplectic form $\sigma$ in \eqref{mz13xpwj}.
Note that $\phi$ restricts to the identity map on $\O_x \times \{0\}$ so, as in the proof of Theorem \ref{ikr142qd}, it suffices to show that $\phi^*\widehat{\omega}$ extends smoothly to $\sigma$ at $t = 0$.
First note that for all $(y, t) \in \O_x \times \R^\times$ and $(X_v|_y, 0) \in \ker (d\pr_\R)_{(y, t)} \subset T_{(y, t)}(\O_x \times I)$, where $v \in \g$, we have
\[
d\phi_{(y, t)}(X_v|_y, 0) = (X_v|_{e^{ty}}, 0).
\]
Hence, for all $(X_{v_i}|_y, 0) \in \ker (d\pr_\R)_{(y, t)}$ with $i = 1, 2$ and $t \ne 0$, we have
\begin{align*}
(\phi^*\widehat{\omega})_{(y, t)}((X_{v_1}|_y, 0), (X_{v_2}|_y, 0))
&=
\frac{1}{t} \omega_{e^{ty}}(X_{v_1}, X_{v_2}) \\
&=
\frac{1}{2t}
\left(\langle v_1, \Ad_{e^{ty}} v_2 \rangle - \langle v_2, \Ad_{e^{ty}} v_1 \rangle\right) \\
&=
\langle v_1, \frac{\Ad_{e^{ty}} - \Ad_{e^{-ty}}}{2t} v_2 \rangle,
\end{align*}
which indeed extends smoothly to \eqref{mz13xpwj} as $t \to 0$.
\end{proof}

\subsection{Moduli spaces of flat connections}

For a compact connected oriented surface $\Sigma$ of genus $g$ with $r + 1$ boundary components ($r \ge 0$), let $\M(\Sigma)$ be the moduli space of flat connections on the trivial principal $G$-bundle over $\Sigma$ \cite{MR702806}.
Recall \cite[Theorem 9.1]{alekseev1998lie} that $\M(\Sigma)$ has the structure of a quasi-Hamiltonian $G^{r+1}$-space.
More precisely, there is an identification
\[
\M(\Sigma) = \underbrace{D(G) \circledast \cdots \circledast D(G)}_{\text{$r$ times}} \circledast \underbrace{\mathbf{D}(G) \circledast  \cdot \cdots \circledast \mathbf{D}(G)}_{\text{$g$ times}},
\]
where $\mathbf{D}(G)$ is the internal fusion of $D(G)$.
We can define an additive version of this space as
\[
\N(\Sigma) = \underbrace{T^*G \circledast \cdots \circledast T^*G}_{\text{$r$ times}} \circledast \underbrace{\mathbf{T^*}G \circledast  \cdot \cdots \circledast \mathbf{T^*}G}_{\text{$g$ times}},
\]
where $\mathbf{T}^*G$ is $T^*G$ with the diagonal action of $G$.
By Theorem \ref{jtwx49l3}, Corollary \ref{6mj8ptsk}, and Theorem \ref{ikr142qd}, we get:

\begin{Cor}\label{lstnw1kj}
There is a Hamiltonian deformation from $\M(\Sigma)$ to $\N(\Sigma)$.
\end{Cor}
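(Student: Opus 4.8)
The plan is to obtain the statement purely by assembling the three cited building blocks: the deformation of the double (Theorem \ref{ikr142qd}), the compatibility of internal fusion with deformations (Theorem \ref{jtwx49l3}), and the compatibility of the fusion product with deformations (Corollary \ref{6mj8ptsk}). Since both $\M(\Sigma)$ and $\N(\Sigma)$ are presented as iterated fusion products of the elementary pieces $D(G)$, $\mathbf{D}(G)$ on one side and $T^*G$, $\mathbf{T^*}G$ on the other, the strategy is to deform each elementary factor to its additive counterpart and then fuse.

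First, I would deform the individual factors. For each of the $r$ copies of $D(G)$, Theorem \ref{ikr142qd} directly supplies a Hamiltonian deformation to $T^*G$; here $D(G)$ is regarded as a $G \times H$-space with $H = G$, the first $G$ being the factor to be fused and the second being a spectator. For each of the $g$ handle factors, I would apply Theorem \ref{jtwx49l3} to the deformation of Theorem \ref{ikr142qd}, now taking the auxiliary group to be trivial so that $D(G)$ is a $G \times G$-space. This yields a Hamiltonian deformation of the internal fusion $\mathbf{D}(G)$ to the internal fusion of $T^*G$. One then checks that the latter is exactly $\mathbf{T^*}G$: by definition, the internal fusion of the Hamiltonian $G \times G$-space $T^*G$ is $T^*G$ equipped with the moment map $\nu_1 + \nu_2$ and the diagonal $G$-action, which is $\mathbf{T^*}G$ as defined in the text.

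Second, with a Hamiltonian deformation in hand for every elementary factor, I would invoke Corollary \ref{6mj8ptsk}. As the corollary is stated for a fusion of two factors, I would proceed by induction on the total number $r + g$ of factors, using the associativity of the fusion product and applying the corollary at each step to the partial fusion (which carries a deformation by the inductive hypothesis) fused with the next elementary factor. This produces a Hamiltonian deformation of $D(G)^{\circledast r} \circledast \mathbf{D}(G)^{\circledast g} = \M(\Sigma)$ to $(T^*G)^{\circledast r} \circledast (\mathbf{T^*}G)^{\circledast g} = \N(\Sigma)$.

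The main difficulty I anticipate is organizational rather than analytic, all of the geometric content being already packaged in the three cited results. One must keep the group-theoretic bookkeeping consistent so that the single fused diagonal $G$ together with the $r$ spectator copies of $G$ coming from the doubles assemble into the correct total $G^{r+1}$-action, which is reassuring since $G \times G^r = G^{r+1}$ matches the $G^{r+1}$-space structure of $\M(\Sigma)$. Beyond this, one only needs to confirm that the identifications of $\M(\Sigma)$ and $\N(\Sigma)$ quoted from \cite{alekseev1998lie} are compatible with the deformations produced at each stage; once associativity of fusion and the matching of the internal-fusion target with $\mathbf{T^*}G$ are in place, no further computation is required.
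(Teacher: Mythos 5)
Your proposal is correct and follows essentially the same route as the paper, which simply cites Theorem \ref{ikr142qd}, Theorem \ref{jtwx49l3}, and Corollary \ref{6mj8ptsk} without further elaboration. Your additional details (the induction on the number of fused factors, the identification of the internal fusion of $T^*G$ with $\mathbf{T^*}G$, and the group-theoretic bookkeeping) are exactly the implicit steps the paper leaves to the reader.
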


More generally, let $x_0, \ldots, x_r \in \g$ be such that the conditions of Theorem \ref{oi2khfoe} are satisfied and consider the conjugacy classes $C_i \coloneqq C_{e^{x_i}}$.
Recall \cite[Theorem 9.2]{alekseev1998lie} that the moduli space $\M(\Sigma, C_0, \ldots, C_r)$ of flat connections on the trivial principal $G$-bundle over $\Sigma$ whose holonomy on the $i$th boundary has values in $C_i$ is a quasi-Hamiltonian reduction of $\M(\Sigma)$ with respect to $(C_0, \ldots, C_r)$.
By Theorem \ref{oi2khfoe}, we have the following.

\begin{Cor}\label{rupcqbmj}
There exists a Hamiltonian deformation from $\M(\Sigma,C_0, \ldots, C_r)$ to a symplectic reduction of $\N(\Sigma)$.
\end{Cor}
 
\bibliographystyle{plain}
\bibliography{deformations-of-quasi-hamiltonian-spaces.bib}

\end{document}